\theoremstyle{plain}
\newtheorem{thm}{Theorem}[section]
\newtheorem{lem}{Lemma}[section]
\newtheorem{corl}{Corollary}[section]
\theoremstyle{definition}
\theoremstyle{remark}
\title[On existence and properties of strong solutions ]{On existence and properties of strong solutions of one-dimensional stochastic equations with an additive noise}
\author{Andrey Yu. Pilipenko}
\thanks{Research is partially supported by State fund for fundamental researches of Ukraine and the Russian foundation for basic researches
    Grant F40.1/023}
\thanks{Research is partially supported by the Grant of the President of Ukraine,
    Grant F47/033}
\address{Institute of Mathematics,  National Academy of Sciences of
 Ukraine, Tereshchenkivska str. 3, 01601, Kiev, Ukraine
}
\email{apilip@imath.kiev.ua}
\subjclass[2010]{
60H10,  60J75}
\keywords{ Stochastic differential equations, Strong solution, L\'evy symmetric stable process}
\begin{document}

\begin{abstract}
One-dimensional stochastic differential equations with additive L\'evy noise are considered. Conditions for existence and uniqueness of a strong solution  are obtained. In particular, if the noise is a L\'evy symmetric stable process with $\alpha\in(1;2)$, then the measurability and boundedness of a drift term is sufficient for the existence of a strong solution. We also study continuous dependence  of the strong solution on the initial value and the drift.
\end{abstract}

\maketitle

\section*{Introduction}
Consider an SDE
\begin{equation}
\label{eq1.1}
\xi(t)=x+\int^t_0a(\xi(s))ds+Z(t), \ t\geq0,
\end{equation}
where $a: {\mathbb R}\to{\mathbb R}$ is a measurable function, $Z$ is a L\'evy process.  We study a question of
 existence and uniqueness for   the strong solution of \eqref{eq1.1}, and also its continuous
 dependence on initial value $x$ and a function $a.$

At first we obtain a few general results and then apply them to the case, where $Z$ is a symmetric stable process with $\alpha\in(1,2).$
In particular, in this case the strong solution exists and is unique if $a$ is bounded. Moreover,
let $\{\xi_n, n\geq1\}$ be a sequence that satisfies \eqref{eq1.1} with initial values
$\{x_n, n\geq1\}$ and drift functions $\{a_n, n\geq1\}. $ We prove that if $x_n$
converges to $x,$ $a_n$ converges to $a$ almost surely with respect to the Lebesgue measure, and
 a sequence of functions $\{a_n, n\geq1\}$ is uniformly bounded, then we have the
 uniform convergence of solutions in probability:
$$
\forall \ T>0: \ \sup_{t\in[0, T]}|\xi_n(t)-\xi(t)|\overset{P}{\rightarrow}0, \ n\to\infty.
$$
A lot of ideas and methods of investigation are quite standard. We use the
Yamada--Watanabe theorem, we prove that the minimum of two solutions is a
solution, we use the Skorokhod's method of a common probability space. However we
cannot find in the literature the direct reference to a general result which
can be applied to SDEs with L\'evy noise.

\section{Pathwise uniqueness}
\label{section1}

In this section we prove that a weak uniqueness of \eqref{eq1.1} yields a pathwise uniqueness.
If we suppose also  existence of a weak solution, then reasoning of the Yamada--Watanabe theorem and some
minor technical assumptions will imply  existence and uniqueness of the strong solution.

Let $a: {\mathbb R}\to{\mathbb R}, \ Z: [0, \infty)\to{\mathbb R}$ be measurable (non-random) functions.
Consider the equation
$$
\xi(t)=x+\int^t_0a(\xi(s))ds+Z(t), \ t\geq0.
$$
We will assume by definition that if $\xi$ is a solution of this integral equation, then
$$
\int^T_0|a(\xi(s))|ds<\infty,
$$
for any $T>0.$

We need the following simple statement about solutions of non-random integral equations.

\begin{lem}
\label{lem2.1}
 Assume that
 measurable functions $\xi_i: [0, \infty)\to{\mathbb R}, i=1,2,$ satisfy the equation
\begin{equation}
\label{eq3.1}
\xi_i(t)=x+\int^t_0a(\xi_i(s))ds+Z(t), \ t\geq0.
\end{equation}
Then
$
\xi_-(t)=\xi_1(t)\wedge\xi_2(t)
$ and $\xi_+(t)=\xi_1(t)\vee\xi_2(t)$ are also solutions of \eqref{eq3.1}.
\end{lem}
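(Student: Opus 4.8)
The plan is to pass to the difference $g(t):=\xi_1(t)-\xi_2(t)$, in which the common noise $Z$ cancels: $g(t)=\int_0^t\bigl(a(\xi_1(s))-a(\xi_2(s))\bigr)\,ds$. By the standing integrability assumption the integrand belongs to $L^1_{\mathrm{loc}}[0,\infty)$, so $g$ is absolutely continuous with $g(0)=0$ and $g'(s)=a(\xi_1(s))-a(\xi_2(s))$ for a.e.\ $s$. Writing $g^+:=g\vee 0$ and $g^-:=(-g)\vee 0$ we have
$$
\xi_+(t)=\xi_2(t)+g^+(t),\qquad \xi_-(t)=\xi_2(t)-g^-(t),
$$
and $g^\pm$ are absolutely continuous as compositions of the $1$-Lipschitz maps $u\mapsto u^{\pm}$ with $g$. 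Since $\xi_\pm(s)\in\{\xi_1(s),\xi_2(s)\}$ pointwise, $|a(\xi_\pm(s))|\le|a(\xi_1(s))|+|a(\xi_2(s))|$, so the required condition $\int_0^T|a(\xi_\pm(s))|\,ds<\infty$ is inherited from $\xi_1,\xi_2$.

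The core step is to differentiate $g^{\pm}$. I would invoke the classical fact that an absolutely continuous function has derivative zero a.e.\ on each of its level sets; applying it to $g$ on $\{g=0\}$ yields $(g^+)'(s)=g'(s)\,\mathbf{1}_{\{g(s)>0\}}$ and $(g^-)'(s)=-g'(s)\,\mathbf{1}_{\{g(s)<0\}}$ for a.e.\ $s$. Integrating, and substituting $g'=a(\xi_1)-a(\xi_2)$, one gets $g^+(t)=\int_0^t\bigl(a(\xi_1(s))-a(\xi_2(s))\bigr)\mathbf{1}_{\{g(s)>0\}}\,ds$. The key observation is that the integrand here equals $a(\xi_+(s))-a(\xi_2(s))$ for \emph{every} $s$: on $\{g(s)>0\}$ both expressions are $a(\xi_1(s))-a(\xi_2(s))$ because there $\xi_+(s)=\xi_1(s)$, while on $\{g(s)\le0\}$ both vanish because there $\xi_+(s)=\xi_2(s)$. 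Hence $g^+(t)=\int_0^t\bigl(a(\xi_+(s))-a(\xi_2(s))\bigr)\,ds$, and symmetrically $-g^-(t)=\int_0^t\bigl(a(\xi_-(s))-a(\xi_2(s))\bigr)\,ds$. Adding the equation $\xi_2(t)=x+\int_0^t a(\xi_2(s))\,ds+Z(t)$ to each of these identities gives exactly \eqref{eq3.1} for $\xi_+$ and for $\xi_-$.

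The only genuinely non-routine ingredient is the assertion that $g'=0$ a.e.\ on $\{g=0\}$; I would either cite it from a standard real-analysis reference or supply the short proof: at almost every point $s_0$ of $\{g=0\}$ the point $s_0$ is a Lebesgue density point of $\{g=0\}$ and a differentiability point of $g$, and choosing $s_n\to s_0$ with $g(s_n)=g(s_0)=0$ forces the difference quotients, and hence $g'(s_0)$, to vanish. Everything else is bookkeeping with $\wedge$, $\vee$ and indicator functions.
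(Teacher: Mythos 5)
Your proof is correct, but it follows a different route from the paper's. The paper works topologically: it notes that $\xi_1-\xi_2$ is continuous, decomposes the open set $U=\{\xi_1\ne\xi_2\}$ into disjoint component intervals $(b_k,c_k)$, observes that $\xi_1=\xi_2$ at every finite endpoint so that $\int_{b_k}^{c_k}a(\xi_1(s))\,ds=\int_{b_k}^{c_k}a(\xi_2(s))\,ds$ on each complete component, and then assembles $\int_0^t a(\xi_\pm(s))\,ds$ piece by piece from $[0,t]\setminus U$, the complete components, and the partial component containing $t$. You instead exploit the absolute continuity of $g=\xi_1-\xi_2$, the real-analysis lemma that an absolutely continuous function has vanishing derivative a.e.\ on each level set (hence the chain rule $(g^+)'=g'\mathbf{1}_{\{g>0\}}$ a.e.), and the pointwise identity $\bigl(a(\xi_1)-a(\xi_2)\bigr)\mathbf{1}_{\{g>0\}}=a(\xi_+)-a(\xi_2)$; the endpoint-matching of the paper is exactly what this level-set lemma encodes in disguise. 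Your argument is shorter and avoids the combinatorial bookkeeping over components; the paper's is more elementary, using only continuity, the intermediate value theorem and additivity of the integral. One small point of presentation: to conclude $(g^+)'=0$ a.e.\ on $\{g=0\}$ it is cleanest either to apply the level-set lemma directly to $g^+$ on $\{g^+=0\}\supseteq\{g=0\}$, or to note that $g(s_0)=0$ and $g'(s_0)=0$ force $g^+(s)=O(|g(s)|)=o(|s-s_0|)$; your sketch establishes $g'=0$ a.e.\ on $\{g=0\}$, which yields the same conclusion by the second remark, so the gap is only expository.
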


\begin{proof}
At first let us observe that
$$
\int^T_0|a(\xi_\pm(s))|ds\leq\int^T_0(|a(\xi_1(s))|+|a(\xi_2(s))|)ds<\infty,
$$
so integrals $\int^T_0a(\xi_\pm(s))ds$ are well-defined.

Let us show that $\xi(t)=\xi_-(t)$ is a solution of \eqref{eq3.1}. The reasoning for $
\xi_+(t)$ is the same. Since the function $\xi_1(t)-\xi_2(t)=\int^t_0(a(\xi_1(s))-a(\xi_2(s)))
ds$ is continuous, the set
$$
U=\{t\geq0: \xi_1(t)\ne\xi_2(t)\}
$$
is open.

Let $U=\cup_k(b_k, c_k),$ where $(b_k, c_k)\cap(b_j, c_j)=\O$ for $k\ne j$ (possibly
$c_k=\infty$ for some $k$).

For any $k$ the only one of equalities is satisfied, either $\xi(t)=\xi_1(t), t\in(b_k, c_k),$ or
 $\xi(t)=\xi_2(t), t\in(b_k, c_k).$ Moreover, if $c_k\ne\infty,$ then
$$
 \xi_1(b_k)=\xi_2(b_k), \ \xi_1(c_k)=\xi_2(c_k).
$$
This yields
\begin{equation}
\label{eq4.1}
\begin{split}
\int^{c_k}_{b_k}a(\xi_1(s))ds=\int^{c_k}_{b_k}a(\xi_2(s))ds=\int^{c_k}_{b_k}a(\xi(s))ds=\\
=-Z(c_k)+\xi_1(c_k)+Z(b_k)-\xi_1(b_k).
\end{split}
\end{equation}
Let $t\in(b_n, c_n).$ Assume that $\xi_1(t)<\xi_2(t).$ Then
$$
\int^t_0a(\xi(s))ds=\Bigg(\int_{[0,t]\setminus U}+\sum_{(b_k, c_k)\subset[0, t]}\int^{c_k}_{b_k}
+\int^t_{b_n}\Bigg)
a(\xi(s))ds.
$$
For any $s\notin U: \xi(s)=\xi_1(s)=\xi_2(s).$ So the first integral equals
$$
\int_{[0,t]\setminus U} a(\xi_1((s))ds.
$$
Due to \eqref{eq4.1} we have that the second integral is equal to
$$
\sum_{(b_k, c_k)\subset[0,t)}\int^{c_k}_{b_k}a(\xi_1(s))ds.
$$

For any $s\in(b_n, c_n): \xi_1(s)<\xi_2(s).$ So
$$
\xi(s)=\xi_1(s)\wedge\xi_2(s)=\xi_1(s), \ s\in(b_n, t).
$$
Thus the third integral equals
$$
\int^t_{b_n} a(\xi_1(s))ds.
$$
That is
$$
\int^t_0a(\xi(s))ds=\int^t_0a(\xi_1(s))ds,
$$
$$
\xi(t)=\xi_1(t)=x+\int^t_0a(\xi_1(s))ds+Z(t)=x+\int^t_0a(\xi(s))ds+Z(t).
$$
The case $t\notin U$ can be considered analogously. Lemma \ref{lem2.1} is
proved.
\end{proof}

Let now $Z(t), t\geq0,$ be a L\'evy  process defined on a filtered probability space $(\Omega, {\mathcal F}, {\mathcal F}_t, P)$. In this case we will consider only $({\mathcal F}_t)$-adapted solutions of
 \eqref{eq1.1}.

Lemma \ref{lem2.1} and weak uniqueness of solution of \eqref{eq1.1} imply pathwise uniqueness.
For the corresponding definitions see for example \cite{RevuzYor}, Ch.IX \S\, 1.

\begin{corl}
\label{corl1}
Assume that \eqref{eq1.1} satisfies the weak uniqueness property. Then we have the pathwise uniqueness
for a solutions of \eqref{eq1.1}.
\end{corl}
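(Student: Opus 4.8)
The strategy is the classical argument that weak uniqueness plus the "minimum of two solutions is again a solution" property forces pathwise uniqueness. Suppose $\xi_1$ and $\xi_2$ are two $({\mathcal F}_t)$-adapted solutions of \eqref{eq1.1} on the same filtered probability space, driven by the same L\'evy process $Z$, with the same initial value $x$. I want to show $\xi_1\equiv\xi_2$ almost surely. Set $\xi_-=\xi_1\wedge\xi_2$. For each fixed $\omega$ the deterministic paths $s\mapsto\xi_i(s,\omega)$ and $s\mapsto Z(s,\omega)$ satisfy the non-random integral equation \eqref{eq3.1}, so Lemma \ref{lem2.1} applies pathwise and tells us that $\xi_-$ is again a solution of \eqref{eq1.1} with the same $x$ and the same driving $Z$. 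One should check that $\xi_-$ is $({\mathcal F}_t)$-adapted, which is immediate since the minimum of two adapted processes is adapted.

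Now invoke weak uniqueness: $\xi_1$, $\xi_2$ and $\xi_-$ are all (weak, indeed strong) solutions of \eqref{eq1.1} with the same initial condition, hence they have the same one-dimensional — in fact the same finite-dimensional — distributions. In particular, for every fixed $t\ge0$,
$$
E\bigl[\xi_-(t)\bigr]=E\bigl[\xi_1(t)\bigr]=E\bigl[\xi_2(t)\bigr],
$$
provided these expectations are finite; to avoid integrability issues one instead compares the laws directly, e.g. $E[\arctan\xi_-(t)]=E[\arctan\xi_1(t)]$. Since $\xi_-(t)=\xi_1(t)\wedge\xi_2(t)\le\xi_1(t)$ pointwise and the strictly increasing bounded function $\arctan$ preserves this, $\arctan\xi_-(t)\le\arctan\xi_1(t)$ a.s. with equal expectations forces $\arctan\xi_-(t)=\arctan\xi_1(t)$ a.s., i.e. $\xi_1(t)\le\xi_2(t)$ a.s. By symmetry $\xi_2(t)\le\xi_1(t)$ a.s., so $\xi_1(t)=\xi_2(t)$ a.s. for each fixed $t$.

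Finally, upgrade the "for each fixed $t$" conclusion to "for all $t$ simultaneously, a.s." using path regularity: the difference $\xi_1(t)-\xi_2(t)=\int_0^t\bigl(a(\xi_1(s))-a(\xi_2(s))\bigr)\,ds$ is continuous in $t$ (as already noted in the proof of Lemma \ref{lem2.1}), so a countable dense set of exceptional-free times $t$ suffices to conclude $P(\forall t\ge0:\ \xi_1(t)=\xi_2(t))=1$, which is precisely pathwise uniqueness. The only genuinely delicate point is the comparison step: one must make sure the random variables being compared are integrable (or else phrase everything through bounded monotone test functions as above) and that weak uniqueness is being applied to a legitimate weak solution on a possibly enlarged space — but since $\xi_-$ lives on the original space with the original $Z$, no enlargement is needed and this is routine.
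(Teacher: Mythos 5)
Your proof is correct and follows essentially the same route as the paper: apply Lemma \ref{lem2.1} pathwise to get that the minimum is again a solution, invoke weak uniqueness to equate laws, and use the fact that two a.s.\ ordered random variables with the same distribution must coincide a.s., then upgrade to all $t$ via path regularity. The only cosmetic differences are that the paper compares $\xi_-$ with $\xi_+$ rather than $\xi_-$ with $\xi_1$, and your explicit use of a bounded monotone test function such as $\arctan$ cleanly handles the integrability point that the paper leaves implicit.
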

Really, let $\xi_1(t)$ and $\xi_2(t)$ be solutions of
\eqref{eq1.1} defined on the same filtered probability space. Then $\xi_-(t)=\xi_1(t)\wedge\xi_2(t)$ and $\xi_+(t)=\xi_1(t)\vee\xi_2(t)$  are
also solutions of \eqref{eq1.1}. Trajectories of $\xi_1$ and $\xi_2$ are c\'adl\'ag. So, if
$$
P(\exists \ t\geq0: \xi_1(t)\ne \xi_2(t))>0,
$$
then
$$
\exists \ t\geq0: \ P(\xi_1(t)\ne\xi_2(t))>0,
$$
and hence
$$
\exists \ t\geq0:   \ P(\xi_-(t)<\xi_+(t))>0.
$$
Since $\xi_-(t)\leq\xi_+(t),$ the distributions of random variables $\xi_-(t)$ and $\xi_+(t)$ cannot coincide.
This contradicts weak uniqueness. Thus
$$
P(\forall \ t\geq0: \xi_1(t)=\xi_2(t))=1.
$$

Applying the Yamada--Watanabe theorem and Corollary \ref{corl1} we obtain the following
statement on existence of the strong solution (the formulation of the Yamada--Watanabe theorem was given for Wiener noise,
 but the proof can be applied to our situation almost without changes).

\begin{corl}
\label{corl2}
Assume that there exists a unique weak solution of \eqref{eq1.1}. Then this solution is a strong solution.
\end{corl}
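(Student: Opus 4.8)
The plan is to run the classical Yamada--Watanabe argument, with Corollary~\ref{corl1} supplying the pathwise uniqueness that the argument needs. First I would record the two ingredients: by hypothesis \eqref{eq1.1} has a weak solution, and since the unique weak solution hypothesis includes weak uniqueness, Corollary~\ref{corl1} gives pathwise uniqueness, i.e.\ any two solutions of \eqref{eq1.1} on a common filtered space coincide almost surely. The objective is then to produce a (jointly) measurable map $F\colon {\mathbb R}\times D \to D$, where $D:=D([0,\infty),{\mathbb R})$, such that whenever $Z$ is a L\'evy process with the prescribed law on a filtered space $(\Omega,{\mathcal F},{\mathcal F}_t,P)$, the process $\xi:=F(x,Z)$ is $({\mathcal F}_t)$-adapted and solves \eqref{eq1.1}; producing such an $F$ is precisely the statement that the weak solution is strong.

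Second, I would realize a weak solution $(\xi,Z)$ on some space and pass to laws on Skorokhod space: let $\mu$ be the law of $Z$ on $D$ and let $Q(\,\cdot\mid z)$ be a regular conditional distribution of $\xi$ given $Z=z$, which exists because $D$ with the $J_1$ topology is Polish. On the canonical product space carrying coordinates $(z,w_1,w_2)$ equipped with the measure $\mu(dz)\,Q(dw_1\mid z)\,Q(dw_2\mid z)$, set $\xi^{(i)}=w_i$ and $Z=z$, so $\xi^{(1)}$ and $\xi^{(2)}$ are conditionally independent given $z$ but are driven by the \emph{same} noise $Z$. With respect to a suitable augmentation of the filtration generated by $(z,w_1,w_2)$, the crucial point, inherited from the original weak solution through the disintegration, is that $Z$ is still a L\'evy process of law $\mu$ and each pair $(\xi^{(i)},Z)$ is again a solution of \eqref{eq1.1} in the adapted sense. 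Pathwise uniqueness (Corollary~\ref{corl1}) then forces $\xi^{(1)}=\xi^{(2)}$ a.s., which means $Q(\,\cdot\mid z)=\delta_{F(x,z)}$ for $\mu$-a.e.\ $z$ with $F$ measurable; the parameter $x$ enters only as the fixed initial point.

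Third, I would close the loop: for the original weak solution this gives $\xi=F(x,Z)$ a.s., and since $F$ depends only on $\mu$ (the characteristic triplet of $Z$) and not on the particular space, feeding any L\'evy process $Z$ of that law into $F$ yields a solution that is a deterministic measurable functional of $(x,Z)$, hence adapted to the completed natural filtration of $Z$; combined with pathwise uniqueness this is exactly strong existence and uniqueness, and weak uniqueness then identifies this strong solution with the unique weak solution in the statement. This is the Lévy-noise version of the classical scheme; cf.\ \cite{RevuzYor}, Ch.~IX.

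I expect the main obstacle to be the bookkeeping of filtrations on the enlarged product space: one must verify that $Z$ there remains a L\'evy process with respect to a filtration to which both $\xi^{(1)}$ and $\xi^{(2)}$ are adapted and with respect to which $Z$ retains independent increments, so that each $(\xi^{(i)},Z)$ genuinely qualifies as a solution. This is where the conditional independence of $\xi^{(1)}$ and $\xi^{(2)}$ given $Z$ is used, and it is the only place where moving from Wiener to L\'evy noise requires a (routine) check that the independent-increment/martingale properties survive the conditioning. A secondary, purely technical, point is that the convention $\int_0^T|a(\xi^{(i)}(s))|\,ds<\infty$ is preserved: it holds for the original weak solution, is a property of the law of $\xi$, and hence holds $Q(\,\cdot\mid z)$-a.s.\ for $\mu$-a.e.\ $z$.
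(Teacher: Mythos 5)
Your proposal is correct and follows essentially the same route as the paper, which simply invokes the Yamada--Watanabe theorem together with Corollary~\ref{corl1} (noting that the classical Wiener-noise proof carries over to L\'evy noise almost without changes). Your write-up just makes explicit the standard regular-conditional-distribution/product-space construction that the paper leaves to the reference, and correctly flags the filtration bookkeeping as the only point needing care.
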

As an application of Corollary \ref{corl2} let us consider the case when $Z(t), t\geq0,$ is a
symmetric stable process, i.e. $Z(t), t\geq0,$ is a c\'adl\'ag process with stationary
independent increments and
$$
\exists  \alpha\in(0, 2]\ \exists c>0 \ \forall  \lambda\in{\mathbb R}\ \forall  t\geq0: \
E\exp\{i\lambda Z(t)\}=\exp\{-ct|\lambda|^\alpha\}.
$$

We need the following result on existence and uniqueness, and properties of weak solution of
\eqref{eq1.1} with symmetric stable noise.

\begin{thm}
\label{thm1.1}
Assume that $Z(t), t\geq0,$ is a symmetric stable process with $\alpha\in(1, 2).$

1) If $a\in L_\infty({\mathbb R}),$ then there exists a unique weak solution to \eqref{eq1.1}.

2) If $a\in L_p({\mathbb R})$ for some $p\in\Bigg(\frac{1}{\alpha-1}; +\infty\Bigg],$ then there exists a weak
solution of \eqref{eq1.1} such that

a) $\xi$ is a Markov process with a continuous transition probability density $p(t,x,y), t>0,\ x\in{\mathbb R},\ y\in{\mathbb R};$

b) for any $T>0$ there exists a constant $N=N{(T, \|a\|_{L_p})}$ such that
$$
\forall \ t\in(0, T] \ \forall  x,y\in{\mathbb R} \ \forall  \ k\in\{0;1\}: \
\Bigg|\frac{\partial^kp(t,x,y)}{\partial x^k}\Bigg|\leq\frac{Nt}{(t+|x-y|)^{\alpha+k+1}}.
$$
\end{thm}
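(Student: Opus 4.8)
The plan is to construct the transition density $p(t,x,y)$ from scratch by Levi's parametrix method, to read the heat--kernel bounds of part~2b) off that construction, to obtain a weak solution of \eqref{eq1.1} by approximation, and to deduce weak uniqueness in part~1) by a Gr\"onwall argument for the one--dimensional law of an arbitrary solution. Write $g_t(\cdot)$ for the transition density of $Z$ (the symmetric $\alpha$--stable density); it is smooth, satisfies the semigroup property, the scaling $g_t(y)=t^{-1/\alpha}g_1(t^{-1/\alpha}y)$, and the classical bounds $|\partial_y^k g_t(y)|\le N\,t\,(t^{1/\alpha}+|y|)^{-\alpha-1-k}$ for $k\in\{0,1,2\}$ (of which the profile $Nt(t+|y|)^{-\alpha-1-k}$ in the statement is the $t\le T$ version). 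Taking $g$ as the zeroth approximation and treating $f\mapsto a f'$ as a perturbation of the generator $\Delta^{\alpha/2}$ of $Z$, I would set
$$
p(t,x,y)=g_t(y-x)+\int_0^t\!\!\int_{\mathbb R}g_{t-s}(y-z)\,\Phi_s(z,x)\,dz\,ds,
$$
where $\Phi=\sum_{m\ge1}q^{(m)}$ is the series of iterated space--time convolutions generated by $q^{(1)}_s(z,x)=a(z)\,\partial_z g_s(z-x)$. The hypothesis $p\in\big(\tfrac1{\alpha-1},+\infty\big]$ enters exactly here: by H\"older in $z$ and the scaling of $g$ one has $\int_0^t\!\int_{\mathbb R}|a(z)|\,|\partial_z g_s(z-x)|\,dz\,ds\le C\|a\|_{L_p}\int_0^t s^{(1/q-2)/\alpha}\,ds$ with $\tfrac1p+\tfrac1q=1$, and the exponent $(1/q-2)/\alpha$ exceeds $-1$ precisely when $p>1/(\alpha-1)$ (it equals $-1/\alpha>-1$ when $p=\infty$), so the first kernel is integrable in time at $0$.

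\textbf{The main obstacle} is to carry this through: to show that $\Phi$ converges absolutely on $[0,T]\times\mathbb R^2$, that $p$ inherits the bounds $|\partial_x^k p(t,x,y)|\le N\,t\,(t+|x-y|)^{-\alpha-1-k}$ for $k\in\{0,1\}$ with $N=N(T,\alpha,\|a\|_{L_p})$ on $t\le T$, that $p$ and $\partial_x p$ are continuous, and that $p$ obeys the Chapman--Kolmogorov identity so that a Markov process with these transition densities exists. This requires careful control of the singular iterated convolutions — propagating the profile $t(t^{1/\alpha}+|x-y|)^{-\alpha-1}$ through each step, using $L_p$--$L_q$ duality to absorb the drift, and checking that every constant depends on $a$ only through $\|a\|_{L_p}$ — and I expect it to be by far the most technical part; estimates of this nature are known for the fractional Laplacian with a gradient perturbation, so one may either reproduce a parametrix argument or invoke such results.

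\textbf{Existence of a weak solution.} Choose $a_n\in C_b^\infty(\mathbb R)$ with $a_n\to a$ in $L_p$ when $p<\infty$, and $a_n\to a$ Lebesgue--a.e.\ with $\sup_n\|a_n\|_\infty=\|a\|_\infty$ when $p=\infty$, so that $\sup_n\|a_n\|_{L_p}\le\|a\|_{L_p}$ in all cases. Each $a_n$ being Lipschitz, \eqref{eq1.1} with drift $a_n$ has a unique strong solution $\xi_n$, whose transition density $p_n$ obeys the above bounds uniformly in $n$, and whose law satisfies a uniform Khasminskii--type estimate $E\int_0^t|f(\xi_n(s))|\,ds\le C(T,\|a\|_{L_p})\|f\|_{L_q}$ together with uniform tail bounds; from this and the $\alpha$--stable estimates one gets tightness of $\{\xi_n\}$ in $D([0,T],\mathbb R)$ (for $p=\infty$ directly, since $\xi_n(\cdot)-Z(\cdot)$ is equi--Lipschitz). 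Along a subsequence $\xi_{n_k}\Rightarrow\xi$; by Skorokhod's theorem we may assume the convergence holds uniformly on $[0,T]$ a.s.\ together with the driving processes. Splitting $\int_0^t a_{n_k}(\xi_{n_k}(s))\,ds=\int_0^t(a_{n_k}-a)(\xi_{n_k}(s))\,ds+\int_0^t a(\xi_{n_k}(s))\,ds$, the first term tends to $0$ in $L_1$ by the uniform density bound together with the choice of $a_n$ (dominated convergence when $p=\infty$), while for the second one notes that letting $k\to\infty$ in $p_{n_k}(s,x,y)\le N s\,(s^{1/\alpha}+|x-y|)^{-\alpha-1}$ shows $\xi(s)$ has a density for a.e.\ $s$, so the occupation measure of $\xi$ is absolutely continuous, modifications of $a$ on Lebesgue--null sets are immaterial, and a routine approximation of $a$ by continuous functions finishes the passage. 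Hence $\xi$ solves \eqref{eq1.1}; continuity of the parametrix construction in the drift gives $p_{n_k}\to p$, so the one--dimensional marginals of $\xi$ have density $p(t,x,\cdot)$, and Markovianity together with part~2b) follow.

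\textbf{Weak uniqueness for $a\in L_\infty$.} Let $\xi$ be any weak solution of \eqref{eq1.1}. Applying It\^o's formula to $f\in C_b^2(\mathbb R)$ and taking expectations shows that $\mu_t:=\mathrm{Law}(\xi(t))$ solves the Fokker--Planck equation $\partial_t\mu=\Delta^{\alpha/2}\mu-\partial_y(a\,\mu)$ in the distributional sense, hence satisfies the mild identity $\mu_t(dy)=g_t(y-x)\,dy-\int_0^t\!\int_{\mathbb R}\partial_y g_{t-s}(y-z)\,a(z)\,\mu_s(dz)\,ds$ as measures. For two such solutions $\mu,\tilde\mu$ this gives $\|\mu_t-\tilde\mu_t\|_{TV}\le\|a\|_\infty\int_0^t\|\partial_y g_{t-s}\|_{L_1}\,\|\mu_s-\tilde\mu_s\|_{TV}\,ds$, and since $\|\partial_y g_{t-s}\|_{L_1}\asymp(t-s)^{-1/\alpha}$ is integrable for $\alpha>1$, the singular Gronwall lemma forces $\mu_t=\tilde\mu_t$. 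Conditioning the same argument on $\mathcal F_s$ and using that the increments of $Z$ after time $s$ are independent of $\mathcal F_s$ identifies every weak solution as a Markov process with transition density $p$; therefore all finite--dimensional distributions agree, which is the asserted weak uniqueness. Combined with Corollary~\ref{corl2}, this also yields a unique strong solution when $a$ is bounded.
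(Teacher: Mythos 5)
The paper does not actually prove Theorem \ref{thm1.1}: part 1) is quoted from Komatsu's work on the martingale problem for perturbed stable generators, and part 2) from Portenko and Podolynny's parametrix construction. So what you have written is not an alternative to the paper's argument but an attempt to reconstruct the content of those references. As a roadmap it is faithful: the density is indeed built by the parametrix series around the stable kernel $g_t$, and your computation locating the role of the hypothesis $p>\tfrac{1}{\alpha-1}$ --- H\"older in space plus the scaling $\|\partial g_s\|_{L_q}\asymp s^{(1/q-2)/\alpha}$, with the exponent exceeding $-1$ exactly on that range of $p$ --- is precisely the mechanism that makes the first convolution, and then the whole iteration, summable. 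Your uniqueness argument for bounded $a$ (Fokker--Planck in mild form, total-variation Gr\"onwall with the integrable singularity $(t-s)^{-1/\alpha}$, valid since $\alpha>1$) is a legitimate route, close in spirit to the $L_p$-estimate/martingale-problem argument of the cited source.

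That said, as a self-contained proof the proposal has an acknowledged hole exactly where the theorem's content lies: you do not carry out the convergence of $\Phi=\sum_m q^{(m)}$ nor the propagation of the profile $Nt\,(t+|x-y|)^{-\alpha-1-k}$ and of continuity and Chapman--Kolmogorov through the iterated singular convolutions; you explicitly defer this to ``known results,'' which is in effect the same citation the paper makes. Two further steps are thinner than they should be. First, the Gr\"onwall argument only identifies one-dimensional marginals; upgrading this to equality of all finite-dimensional distributions requires the regular-conditional-probability argument of Stroock--Varadhan type (restarting the martingale problem at time $s$ from $\mathcal F_s$), which you name but do not execute --- and it is needed, since weak uniqueness in the sense used later in the paper (e.g.\ in Corollary \ref{corl1}) means equality of laws on path space. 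Second, for $p<\infty$, where uniqueness is not claimed, the Markov property of the limit $\xi$ does not follow from convergence of one-dimensional marginals alone; you need convergence of finite-dimensional distributions of the approximating Markov processes together with local uniform convergence $p_{n_k}\to p$ of their transition densities, and the latter (``continuity of the parametrix construction in the drift'') is asserted without proof. None of these gaps indicates a wrong approach, but each must be filled before the sketch becomes a proof.
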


For the proof of the first item see \cite{Komatsu_stable1982}, the second one see in \cite{Portenko_Levy, PortenkoPodolynny}.

\begin{corl}
\label{corl3}
Let $Z(t), t\geq0,$ be a symmetric stable process with $\alpha\in(1, 2)$ and $a\in L_\infty({\mathbb R}).$ Then there exists a unique strong solution to \eqref{eq1.1}.
\end{corl}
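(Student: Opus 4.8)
The plan is to obtain Corollary~\ref{corl3} as a direct consequence of Theorem~\ref{thm1.1}, item~1, and Corollary~\ref{corl2}. First I would invoke part~1 of Theorem~\ref{thm1.1}: since $a\in L_\infty(\mathbb R)$ and $\alpha\in(1,2)$, equation~\eqref{eq1.1} admits a unique weak solution. Then I would apply Corollary~\ref{corl2}, which says precisely that existence of a unique weak solution forces that solution to be strong; this yields at once existence and uniqueness of the strong solution.

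The only point that needs a word of justification is that the weak solution supplied by Theorem~\ref{thm1.1} genuinely meets the integrability requirement built into our notion of a solution, namely $\int_0^T|a(\xi(s))|\,ds<\infty$ for every $T>0$. For bounded $a$ this is immediate, since almost surely $\int_0^T|a(\xi(s))|\,ds\le T\,\|a\|_{L_\infty}<\infty$, so the drift term in \eqref{eq1.1} is well defined pathwise. Consequently the whole chain of implications from Section~\ref{section1} applies verbatim: weak uniqueness gives pathwise uniqueness by Corollary~\ref{corl1}, and the Yamada--Watanabe reasoning then upgrades the weak solution to a strong one as in Corollary~\ref{corl2}.

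I do not expect any genuine obstacle here. The substantive analytic content, existence and uniqueness of the weak solution for a bounded drift with $\alpha\in(1,2)$, is imported from \cite{Komatsu_stable1982} via Theorem~\ref{thm1.1}, and the probabilistic transfer principle is exactly Corollary~\ref{corl2}. Thus the statement is a direct corollary, and the proof reduces to verifying that the hypotheses of Corollary~\ref{corl2} hold, which we have just done.
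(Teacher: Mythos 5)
Your proposal is correct and is exactly the paper's (implicit) argument: Theorem~\ref{thm1.1}, item~1, supplies the unique weak solution for bounded $a$, and Corollary~\ref{corl2} upgrades it to a unique strong solution; the observation that $\int_0^T|a(\xi(s))|\,ds\le T\|a\|_{L_\infty}$ makes the solution concept well defined is a harmless and sensible addition.
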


{\it Remark}. Using a localization technique it is not difficult to prove
the existence of a unique solution to \eqref{eq1.1} if a measurable function $a$ satisfies
a linear growth condition:
$$
\exists K\ \forall x:\ \ |a(x)|\leq K(1+|x|).
$$

\section{Continuous dependence on initial conditions and coefficients of the equation}
\label{section2}

Assume that $\{\xi_n(t), t\geq0\}, n\geq0,$ are solutions of the equations
\begin{equation}
\label{eq13.1}
\xi_n(t)=x_n+\int^t_0a_n(\xi_n(s))ds+Z(t), t\geq0,
\end{equation}
where $\{Z(t), t\geq0\}$ is  a L\'evy  process defined on a filtered probability space $(\Omega, {\mathcal F}, {\mathcal F}_t, P)$.
As in the previous Section we
also require
  ${\mathcal F}_t$-measurability of $\xi_n(t).$

The main result of this Section is the Theorem and Corollary below.
\begin{thm}
\label{thm2.1}
Assume that

1) $\lim_{n\to\infty}x_n=x_0;$

2) $\sup_{n\geq0}\sup_x|a_n(x)|<\infty;$

3) there exists a finite measure $\mu$ on ${\mathcal B}({\mathbb R})$ such that for any $n\geq1$ and
$\lambda$-a.a. $t\geq0$ ($\lambda$ is the Lebesgue measure) the distribution of $\xi_n(t)$ has a density $p_n(x, t)$ w.r.t.
$\mu(dx);$

4) $a_n\to a_0, n\to\infty,$ in measure $\mu;$

5) for $\lambda$-a.a. $t\geq0$  a sequence $\{p_n(\cdot, t), n\geq1\}$ is
uniformly integrable w.r.t. $\mu;$

6) there exists a unique solution to equation \eqref{eq13.1} where $n=0.$

Then for any $T>0:$
\begin{equation}
\label{eq14.1}
\sup_{t\in[0, T]}|\xi_n(t)-\xi_0(t)|\overset{P}{\rightarrow}0, \ n\to\infty.
\end{equation}
\end{thm}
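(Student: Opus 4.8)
The plan is to use the Skorokhod common-probability-space method combined with the pathwise-uniqueness/weak-uniqueness machinery from Section~\ref{section1}. First I would argue tightness of the family $\{\xi_n\}$ in the Skorokhod space $D([0,T],{\mathbb R})$. Since all $a_n$ are uniformly bounded by some constant $K$ (hypothesis~2), each $\xi_n(t)-Z(t)=x_n+\int_0^t a_n(\xi_n(s))\,ds$ is Lipschitz in $t$ with constant $K$, uniformly in $n$, and $x_n$ is bounded (hypothesis~1); hence the modulus of continuity of $\xi_n-Z$ is controlled deterministically, and tightness of $\{\xi_n\}$ follows from tightness of the single process $Z$. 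By Prokhorov's theorem, along any subsequence one can extract a further subsequence such that $(\xi_{n_k}, Z, \xi_0)$ converges in law; by the Skorokhod representation theorem I can realize this convergence almost surely on a new probability space, obtaining processes $(\tilde\xi_{n_k}, \tilde Z_{n_k}, \tilde\xi_0^{n_k})$ with $\tilde\xi_{n_k}\to\tilde\xi$, $\tilde Z_{n_k}\to\tilde Z$, and $\tilde\xi_0^{n_k}\to\tilde\xi_0$ a.s. in $D([0,T])$, where $\tilde Z$ is again a L\'evy process with the same law and $\tilde\xi_0$ solves \eqref{eq13.1} with $n=0$ driven by $\tilde Z$.

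The crux is to pass to the limit in the equation $\tilde\xi_{n_k}(t)=\tilde x_{n_k}+\int_0^t a_{n_k}(\tilde\xi_{n_k}(s))\,ds+\tilde Z_{n_k}(t)$ and show the limit $\tilde\xi$ satisfies $\tilde\xi(t)=x_0+\int_0^t a_0(\tilde\xi(s))\,ds+\tilde Z(t)$. The only delicate term is $\int_0^t a_{n_k}(\tilde\xi_{n_k}(s))\,ds\to\int_0^t a_0(\tilde\xi(s))\,ds$. Here is where hypotheses~3, 4, and 5 enter, and I expect this to be the main obstacle: the $a_n$ are merely measurable, so one cannot use continuity. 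I would split $a_{n_k}(\tilde\xi_{n_k}(s))-a_0(\tilde\xi(s))=\big(a_{n_k}(\tilde\xi_{n_k}(s))-a_0(\tilde\xi_{n_k}(s))\big)+\big(a_0(\tilde\xi_{n_k}(s))-a_0(\tilde\xi(s))\big)$. For the second piece, since $a_0$ is bounded and measurable, approximate it in $L^1(\mu)$ by a bounded continuous function $g$; the continuous part passes to the limit using $\tilde\xi_{n_k}(s)\to\tilde\xi(s)$ for a.e. $s$ (càdlàg convergence gives convergence at continuity points), and the error $\int_0^T|a_0-g|(\tilde\xi_{n_k}(s))\,ds$ is controlled by $E\int_0^T|a_0-g|(\tilde\xi_{n_k}(s))\,ds=\int_0^T\!\!\int|a_0-g|(x)p_{n_k}(x,s)\,\mu(dx)\,ds$, which is small uniformly in $n_k$ by the uniform integrability hypothesis~5 together with $\|a_0-g\|_{L^1(\mu)}$ small. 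For the first piece, the same density/uniform-integrability estimate gives $E\int_0^T|a_{n_k}-a_0|(\tilde\xi_{n_k}(s))\,ds=\int_0^T\!\!\int|a_{n_k}-a_0|(x)p_{n_k}(x,s)\,\mu(dx)\,ds\to 0$ because $a_{n_k}\to a_0$ in $\mu$-measure (hypothesis~4), the integrands are uniformly bounded, and $\{p_{n_k}(\cdot,s)\}$ is uniformly $\mu$-integrable; a Vitali-type argument finishes it. Note that one must check the absolute-continuity hypothesis~3 is preserved under the Skorokhod transfer, which holds since it is a statement about one-dimensional marginal laws.

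Having shown $\tilde\xi$ is a solution of \eqref{eq13.1} with $n=0$ on the new space, the uniqueness hypothesis~6, upgraded to pathwise uniqueness via Corollary~\ref{corl1} (weak uniqueness implies pathwise uniqueness), forces $\tilde\xi=\tilde\xi_0$ a.s., and in particular identifies the subsequential limit law uniquely. Since every subsequence of $\{(\xi_n,Z)\}$ has a further subsequence converging in law to the same limit $(\xi_0,Z)$, the whole sequence converges in law in $D([0,T])$; because the limit $\xi_0$ has the same driving noise $Z$ (the pair converges jointly), a standard argument turns joint convergence in law with a common marginal into convergence in probability of $\sup_{t\in[0,T]}|\xi_n(t)-\xi_0(t)|$ — concretely, consider the joint limit of $(\xi_n,\xi_0,Z)$, which by pathwise uniqueness must be supported on the diagonal $\{\tilde\xi=\tilde\xi_0\}$, so $\sup_{t\le T}|\xi_n-\xi_0|\to 0$ in law, hence in probability since the limit is $0$. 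This yields \eqref{eq14.1}.
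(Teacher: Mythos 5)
Your overall strategy coincides with the paper's: Skorokhod's common probability space, with the crucial device of carrying $\xi_0$ along in the converging tuple so that in the limit one obtains two solutions of the same limiting equation driven by the same noise, which hypothesis 6) then identifies. Your treatment of the drift integral (splitting $a_{n}(\tilde\xi_{n})-a_0(\tilde\xi)$ and using hypotheses 3)--5) via a Lusin/Vitali argument) is a legitimate substitute for the paper's Lemma \ref{lem17}, and your remark that absolute continuity of the one-dimensional marginals survives the Skorokhod transfer is the right observation.

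The one step that does not work as written is the final passage from ``the joint limit of $(\xi_n,\xi_0,Z)$ is supported on the diagonal'' to ``$\sup_{t\le T}|\xi_n(t)-\xi_0(t)|\to 0$ in law.'' The map $(f,g)\mapsto\sup_{t\le T}|f(t)-g(t)|$ is not continuous on $D([0,T])\times D([0,T])$ with the product Skorokhod topology at discontinuous points of the diagonal: if $f$ has a jump of size $h$ at time $1$ and $f_n$, $g_n$ place that jump at $1-1/n$ and $1+1/n$ respectively, then $f_n\to f$ and $g_n\to f$ in the $J_1$ sense while $\sup_t|f_n(t)-g_n(t)|\ge h$ for all $n$. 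Since $Z$ (hence $\xi_0$) in general has jumps, this is exactly the dangerous case, so ``supported on the diagonal'' does not by itself yield uniform closeness. The repair uses an observation you already made for tightness but did not reuse here: on the representation space the noise cancels, so $\tilde\xi_n(t)-\hat\xi_n(t)=(x_n-x_0)+\int_0^t a_n(\tilde\xi_n(s))\,ds-\int_0^t a_0(\hat\xi_n(s))\,ds$ is uniformly Lipschitz in $t$, and the uniform convergence then follows from the $L^1$-convergence of the two drift integrals to the common limit $\int_0^\cdot a_0(\tilde\xi_0(s))\,ds$ (which your Vitali argument already provides, upgraded to sup norm by a triangle inequality). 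This is precisely how the paper closes the argument. A smaller omission: to apply hypothesis 6) on the new probability space you should also note that $\tilde\xi_0(t)$ and $\hat\xi_0(t)$ are independent of the future increments $\sigma(Z_0(t+s)-Z_0(t),\,s\ge 0)$, so that both limit processes are admissible (adapted) solutions of \eqref{eq13.1} with $n=0$.
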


This theorem, Corollary \ref{corl2} and Theorem \ref{thm1.1} imply the following result on the
continuous dependence on a parameter for the  solution of \eqref{eq13.1} with a stable noise.

\begin{corl}
\label{corl4}
Let $\{Z(t), t\geq0\}$ be a symmetric stable process with $\alpha\in(1, 2).$ Assume that items
1), 2) and 4) of Theorem \ref{thm2.1} are satisfied, where $\mu(dx)=(1+|x|)^{\alpha+1}dx$. Then \eqref{eq13.1} has a unique strong
solution for any $n\geq0$ and \eqref{eq14.1} holds true.
\end{corl}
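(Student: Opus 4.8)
The plan is to verify that the hypotheses of Theorem \ref{thm2.1} are met with the specific choice $\mu(dx)=(1+|x|)^{\alpha+1}dx$, invoking Theorem \ref{thm1.1} and Corollary \ref{corl3} to supply the missing ingredients (items 3, 5, 6 of Theorem \ref{thm2.1}). First I would note that $\mu$ is a finite measure on $\mathcal B(\mathbb R)$ precisely because $\alpha+1>2$, so $\int_{\mathbb R}(1+|x|)^{-(\alpha+1)}\,dx<\infty$ — wait, that is the wrong direction, so let me restate: the density bound in Theorem \ref{thm1.1} 2b) is $p(t,x,y)\le Nt(t+|x-y|)^{-(\alpha+1)}$, hence for fixed $t$ and $x_n$ the law of $\xi_n(t)$ has a Lebesgue density $q_n(y,t)$ satisfying $q_n(y,t)\le C(t)(1+|y-x_n|)^{-(\alpha+1)}$, and since the $x_n$ are bounded (item 1), $q_n(y,t)\le C'(t)(1+|y|)^{-(\alpha+1)}$. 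Dividing by the $\mu$-density gives $p_n(y,t):=q_n(y,t)/(1+|y|)^{\alpha+1}$, and then I must check this is the correct normalization; actually the cleaner route is: the law of $\xi_n(t)$ is absolutely continuous w.r.t. Lebesgue measure, hence w.r.t. $\mu$ (both equivalent to Lebesgue on $\mathbb R$), with $\mu$-density $p_n(y,t)=q_n(y,t)(1+|y|)^{-(\alpha+1)}\le C'(t)(1+|y|)^{-2(\alpha+1)}$. This establishes item 3 of Theorem \ref{thm2.1}.

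Next, for item 6, I would apply Corollary \ref{corl3}: since $a_0$ is bounded (item 2 with $n=0$), equation \eqref{eq13.1} with $n=0$ has a unique strong, in particular unique, solution; the same applies to every $n\ge1$, giving the concluding "unique strong solution for any $n\ge0$" claim. For item 5, uniform integrability of $\{p_n(\cdot,t),n\ge1\}$ w.r.t. $\mu$: I would use the uniform pointwise domination just derived, $p_n(y,t)\le C'(t)(1+|y|)^{-2(\alpha+1)}=:g(y)$, and observe that $\int g\,d\mu=C'(t)\int(1+|y|)^{-(\alpha+1)}\,dy<\infty$ since $\alpha+1>2>1$; a family dominated by a single $\mu$-integrable function is uniformly integrable. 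The constant $C'(t)=C(T,\|a_0\|_\infty\vee\sup_n\|a_n\|_\infty)\cdot t$ from Theorem \ref{thm1.1} 2b) is uniform in $n$ precisely because of item 2, which is the point where the uniform boundedness hypothesis is genuinely used.

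Finally, hypotheses 1), 2), 4) are assumed verbatim, so all six conditions of Theorem \ref{thm2.1} hold, and \eqref{eq14.1} follows. The main obstacle is the bookkeeping around item 5: one must be careful that the constant $N=N(T,\|a\|_{L_p})$ in Theorem \ref{thm1.1} — here with $p=\infty$ — depends on the drift only through its sup-norm, so that $\sup_{n}\sup_x|a_n(x)|<\infty$ translates into a single $t$-dependent (but $n$-independent) bound valid for a.a. $t$, and that the density with respect to $\mu$ rather than Lebesgue measure only improves integrability at infinity. Everything else is a direct citation of the results already established in the excerpt.
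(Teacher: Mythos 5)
Your proposal is correct and is essentially the argument the paper intends: the paper offers no proof of Corollary \ref{corl4} beyond the one-line citation of Theorem \ref{thm2.1}, Corollary \ref{corl2} and Theorem \ref{thm1.1}, and your verification of items 3), 5), 6) --- existence and uniqueness from Corollary \ref{corl3} (using weak uniqueness to identify the law of the strong solution with that of the weak solution of Theorem \ref{thm1.1}~2)), Lebesgue densities and their domination from the bound in Theorem \ref{thm1.1}~2b) with $p=\infty$, and uniformity in $n$ coming from hypothesis 2) together with the boundedness of $\{x_n\}$ from hypothesis 1) --- is exactly the intended content. The one point you noticed but left dangling (``wait, that is the wrong direction'') should be closed: as printed, $\mu(dx)=(1+|x|)^{\alpha+1}dx$ is not a finite measure, whereas item 3) of Theorem \ref{thm2.1} (and Lemma \ref{lem17}, which takes $\mu$ to be a probability measure) requires finiteness. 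The exponent is evidently a misprint for $-(\alpha+1)$: with $\mu(dx)=(1+|x|)^{-(\alpha+1)}dx$ the measure is finite since $\alpha+1>1$, and the $\mu$-densities become $p_n(y,t)=q_n(y,t)(1+|y|)^{\alpha+1}\le C'(t)$, uniformly bounded, so uniform integrability is immediate. Your computation with the exponent as written does produce a dominating function of finite $\mu$-integral, but it leaves $\mu$ infinite and hence item 3) not literally verified; either correct the sign or invoke the paper's Remark after Corollary \ref{corl4} to pass to an equivalent finite measure.
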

{\it Remark}.  The convergence of a sequence of functions in the measure $\mu$ is equivalent the convergence in any absolute continuous finite measure with positive density.

\begin{proof}[Proof of Theorem \ref{thm2.1}] We use the Skorokhod idea of using a common
probability space \cite{Skor-issl}, Ch.1 \S 6, Ch.3 \S 3. Consider a sequence of processes
$$
X_n(\cdot)=(\xi_n(\cdot), \xi_0(\cdot),  Z(\cdot), x_n+\int^\cdot_0a_n(\xi_n(s))ds, x_0+\int^\cdot_0a_
0(\xi_0(s))ds), n\geq1
$$
as a sequence with values in
$$
(D([0, T]))^3\times(C([0, T]))^2.
$$
It easily follows from the assumptions 1), 2) of the Theorem that this sequence is tight. So,
 there exists a weakly convergent subsequence $\{X_{n_k}\}.$ Without loss of generality we will
 assume that $\{X_n\}$ is weakly convergent itself.

By the Skorokhod theorem \cite{Skor-issl}, Ch.1 \S 6, there exists a new probability
space and a sequence $\{\widetilde{X}_n, n\geq1\}$ such that
$\widetilde{X}_n\overset{d}{=}X_n, n\geq1,$ and $\{\widetilde{X}_n, n\geq1\}$ converges in
probability to some random element $\widetilde{X}_0.$ Denote the three first
coordinates of $\{\widetilde{X}_n, n\geq1\}$ by $\widetilde{\xi}_n(\cdot), \widehat{\xi}_n(\cdot),
Z_n(\cdot).$ Note that the fourth and the fifth coordinates of $\{\widetilde{X}_n,
n\geq1\}$ are measurable functions of the first and the second one. So they are
equal to $x_n+\int^\cdot_0a_n(\widetilde{\xi}_n(s))ds,$
$x_0+\int^\cdot_0a_0(\widehat{\xi}_n(s))ds,$ respectively.

Let
$$\widetilde{X}_0=(\widetilde{\xi}_0(\cdot), \widehat{\xi}_0(\cdot), Z_0(\cdot), \alpha(\cdot), \beta(\cdot)),$$
where  $\alpha(t), \beta(t), t\in[0, T],$ are continuous processes. We have not known yet  that
$$
\alpha(t)=x_0+\int^t_0a_0(\widetilde{\xi}_0(s))ds, \ \beta(t)=x_0+\int^t_0a_0(\widehat{\xi}_0(s))ds.
$$
Note  that for any $t\in[0, T]$ random variables $\widetilde{\xi}_0(t)$ and $\widehat{\xi}_0(t)$ are
 independent of $\sigma(Z_0(t+s)-Z_0(t), s\geq0).$

Let  us verify that $\widetilde{\xi}_0$ is a  solution of the equation
\begin{equation}
\label{eq16.2}
\widetilde{\xi}_0(t)=x_0+\int^t_0a_0(\widetilde{\xi}_0(s))ds+Z_0(t), t\in[0, T].
\end{equation}
To prove this it is sufficient to prove that for $\lambda$-a.a. $t\in[0, T]:$
\begin{equation}
\label{eq16.0}
x_0+\int^t_0a_0(\widetilde{\xi}_0(s))ds=\alpha(t)  \ \mbox{a.s.}
\end{equation}
It follows from the convergence in probability in $D([0, T])$ that for all $t\in[0, T],$ except
 of possibly countable set, a convergence in probability
\begin{equation}
\label{eq16.1}
\widetilde{\xi}_n(t)\overset{P}{\rightarrow}\widetilde{\xi}_0(t)
\end{equation}
holds.

\begin{lem}
\label{lem17}
Let $\{\eta_n, n\geq0\}$ be a sequence of random variables. Assume that for any $n\geq1$ the
 distribution of $\eta_n$ is absolutely continuous w.r.t. a probability measure $\mu.$ Denote the
  corresponding density by $p_n.$  Let $\{a_n, n\geq0\}$ be a sequence of measurable functions on ${\mathbb R}$.
  Suppose that the following conditions are satisfied:

1) $\eta_n\overset{P}{\rightarrow}\eta_0, \ n\to\infty;$

2) $a_n\overset{\mu}{\rightarrow}a_0, \ n\to\infty;$

3) a sequence of densities $\{p_n, n\geq1\}$ is uniformly integrable w.r.t. $\mu.$

Then
$$
a_n(\eta_n)\overset{P}{\rightarrow}a_0(\eta_0), \ n\to\infty.
$$
\end{lem}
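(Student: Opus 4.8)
The plan is to split the difference $a_n(\eta_n) - a_0(\eta_0)$ into the two natural pieces
$$
a_n(\eta_n) - a_0(\eta_n) \quad\text{and}\quad a_0(\eta_n) - a_0(\eta_0),
$$
and show each converges to zero in probability. The second piece is handled by first replacing $a_0$ with a continuous function: since $a_0$ is only measurable, I would fix $\varepsilon>0$ and use Lusin's theorem (applied to the finite measure $\mu$) to find a continuous, bounded $g$ with $\mu(a_0 \ne g) < \varepsilon$; then $a_0(\eta_n) - g(\eta_n)$ is small off an event of probability at most $\int \mathbf{1}_{\{a_0\ne g\}} p_n \, d\mu$, which by uniform integrability of $\{p_n\}$ is small uniformly in $n$ once $\mu(a_0\ne g)$ is small. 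The continuous function $g$ composed with $\eta_n \overset{P}{\to} \eta_0$ gives $g(\eta_n) \overset{P}{\to} g(\eta_0)$ by the continuous mapping theorem, and then $g(\eta_0) - a_0(\eta_0)$ is controlled by $\mu(a_0 \ne g)$ together with the fact that $\eta_0$'s law need not be dominated by $\mu$ — so I would instead note $\eta_0$ is the limit of the $\eta_n$ and argue through a subsequence, or more cleanly absorb $g(\eta_0)-a_0(\eta_0)$ into the estimate $\limsup_n P(|a_0(\eta_n)-g(\eta_n)|>0)$ since $a_0(\eta_n)\overset{P}{\to}a_0(\eta_0)$ would then follow. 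In practice the cleanest route is: show $a_0(\eta_n)\overset{P}{\to}a_0(\eta_0)$ directly by the three-epsilon argument (approximate $a_0$ by continuous $g$ in $\mu$-measure, use uniform integrability to transfer to the laws of $\eta_n$, use continuity of $g$ for the passage to the limit, and send $\varepsilon\to 0$).

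For the first piece, $a_n(\eta_n) - a_0(\eta_n)$: fix $\delta > 0$ and estimate
$$
P(|a_n(\eta_n) - a_0(\eta_n)| > \delta) = \int \mathbf{1}_{\{|a_n - a_0| > \delta\}} \, p_n \, d\mu.
$$
By hypothesis $a_n \overset{\mu}{\to} a_0$, so $\mu(|a_n - a_0| > \delta) \to 0$; combined with the uniform integrability of $\{p_n\}$ — which says $\sup_n \int_A p_n \, d\mu \to 0$ as $\mu(A) \to 0$ — the right-hand side tends to $0$. This is the part where uniform integrability is essential: without it, mass of $p_n$ could concentrate on the shrinking sets where $a_n$ and $a_0$ still differ.

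Combining, for any $\delta>0$ we get $P(|a_n(\eta_n)-a_0(\eta_0)|>2\delta) \le P(|a_n(\eta_n)-a_0(\eta_n)|>\delta) + P(|a_0(\eta_n)-a_0(\eta_0)|>\delta) \to 0$, which is the claim. The main obstacle I anticipate is the handling of $a_0(\eta_0)$: the hypotheses only give absolute continuity of the laws of $\eta_n$ for $n\ge 1$, not for $\eta_0$, so one cannot directly estimate probabilities involving $a_0(\eta_0)$ via a density against $\mu$. The fix is to never isolate $g(\eta_0)-a_0(\eta_0)$ on its own but to carry the comparison all the way along the sequence $\eta_n$ (whose laws are dominated) and only invoke the continuous mapping theorem for the genuinely continuous approximant $g$, taking the limit in $n$ before taking $\varepsilon \to 0$.
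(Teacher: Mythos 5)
Your decomposition and your treatment of the first piece are correct and complete: the identity $P(|a_n(\eta_n)-a_0(\eta_n)|>\delta)=\int \mathbf{1}_{\{|a_n-a_0|>\delta\}}\,p_n\,d\mu$, combined with $a_n\overset{\mu}{\to}a_0$ and the uniform absolute continuity contained in hypothesis 3), does exactly what you claim. (For what it is worth, the paper gives no proof of this lemma at all, only a pointer to Lemma 2 of Kulik--Pilipenko; your architecture is the natural one and evidently the intended one.)

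The gap is in the second piece, and it is exactly the obstacle you name but do not actually resolve. After Lusin's theorem produces a continuous $g$ with $\mu(a_0\ne g)<\varepsilon$, the term $P\bigl(a_0(\eta_0)\ne g(\eta_0)\bigr)=P\bigl(\eta_0\in\{a_0\ne g\}\bigr)$ must be made small, and the hypotheses do not directly dominate the law of $\eta_0$ by $\mu$. Your proposed fix --- ``never isolate $g(\eta_0)-a_0(\eta_0)$ and carry the comparison along the sequence'' --- cannot work as stated: the quantity $a_0(\eta_0)$ appears in the conclusion and must at some point be compared with $g(\eta_0)$, and ``absorbing'' that comparison into $\limsup_n P(a_0(\eta_n)\ne g(\eta_n))$ presupposes the very convergence $a_0(\eta_n)\overset{P}{\to}a_0(\eta_0)$ you are trying to establish. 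The missing step is to show that the law of $\eta_0$ inherits the uniform absolute continuity from the $\eta_n$: by outer regularity of the finite Borel measure $\mu$ choose an open set $U\supset\{a_0\ne g\}$ with $\mu(U)<2\varepsilon$; since $\eta_n\to\eta_0$ in distribution, the portmanteau theorem gives $P(\eta_0\in U)\le\liminf_n P(\eta_n\in U)=\liminf_n\int_U p_n\,d\mu\le\sup_{n\ge1}\int_U p_n\,d\mu$, which is small by hypothesis 3). (Equivalently, Dunford--Pettis yields a weak $L^1(\mu)$ limit point $p_0$ of $\{p_n\}$, which is necessarily the density of the law of $\eta_0$.) Once this is inserted, your three-epsilon argument closes and the proof is correct.
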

The proof  is similar to \cite{KP}, Lemma 2, where it was considered a sequence of random elements with values in a Polish
space. Note that all functions $\{a_n\}$ may be discontinuous.

It follows from Lemma \ref{lem17}, assumptions of the Theorem and \eqref{eq16.1} that for
$\lambda$-a.a.
$s\in[0, T]:$
$$
a_n(\widetilde{\xi}_n(s))\overset{P}{\rightarrow}a_0(\widetilde{\xi}_0(s)), \ n\to\infty.
$$
So
\begin{equation}
\label{eq18.1}
\begin{split}
&E\sup_{t\in[0, T]}|\int^t_0a_n(\widetilde{\xi}_n(s))ds-\int^t_0a_0(\widetilde{\xi}_0(s))ds|\leq\\
&\leq
E\int^t_0|a_n(\widetilde{\xi}_n(s))-a_0(\widetilde{\xi}_0(s))|ds\to0, n\to\infty,
\end{split}
\end{equation}
by Lebesgue theorem on dominated convergence. Thus \eqref{eq16.0} is satisfied and hence
$\widetilde{\xi}_0$ is a solution of \eqref{eq16.2}.

Similarly it can be proved that $\widehat{\xi}_0$ satisfies the same equation
$$
\widehat{\xi}_0(t)=x_0+\int^t_0a_0(\widehat{\xi}_0(s))ds+Z_0(t), t\in[0, T], \ \mbox{a.s.}
$$
Since this equation has a unique solution, we have equality
$$
\widehat{\xi}_0(t)=\widetilde{\xi}_0(t), t\in[0, T], \ \mbox{a.s.}
$$
Let us return to the initial probability space. Let $\varepsilon>0$ be fixed. Then
$$
P(\sup_{t\in[0, T]}|\xi_n(t)-\xi_0(t)|>\varepsilon)=
$$
$$
=P(\sup_{t\in[0, T]}|\widetilde{\xi}_n(t)-\widehat{\xi}_n(t)|>\varepsilon)=
$$
$$
=
P(|x_n-x_0|+\sup_{t\in[0, T]}|\int^t_0a_n(\widetilde{\xi}_n(s))ds-\int^t_0a_n(\widehat{\xi}_n(s))ds|>\varepsilon)
\leq
$$
$$
\leq
P(|x_n-x_0|+\sup_{t\in[0, T]}|\int^t_0a_n(\widetilde{\xi}_n(s))ds-\int^t_0a_0(\widetilde{\xi}_0(s))ds|>
\frac{\varepsilon}{2})+
$$
$$
+
P(\sup_{t\in[0, T]}|\int^t_0a_n(\widehat{\xi}_n(s))ds-\int^t_0a_0(\widehat{\xi}_0(s))ds|>\frac{\varepsilon}{2}).
$$
The items in the r.h.s. converge to zero by \eqref{eq18.1} and similar statement for
$\widehat{\xi}_n.$ The theorem is proved.
\end{proof}

\bibliographystyle{plain}

\end{document}